\tikzstyle{block} = [draw,fill=blue!20,minimum size=0.5em]
\tikzstyle{branch}=[fill,shape=circle,minimum size=3pt,inner sep=0pt]
\newtheorem{alg}{Algorithm}
\newtheorem{theorem}{Theorem}
\newtheorem{thm}[theorem]{Theorem}
\newtheorem{lem}[theorem]{Lemma}
\newtheorem{prop}[theorem]{Proposition}
\newtheorem{cor}[theorem]{Corollary}
\newcommand{\set}[1]{\left\{#1\right\}}
\newcommand{\ignore}[1]{}
\def\tn{\textnormal}
\def\D{\mathcal{D}}
\def\H{\mathcal{H}}
\def\L{\mathcal{L}}
\def\P{\mathcal{P}}
\def\tn{\textnormal}
\def\a{\alpha}
\def\b{\beta}
\def\l{\ell}
\def\pp{\mathinner{\ldotp\ldotp}}
\title{Generalized de Bruijn words \\ for Primitive words and Powers}
\author{Yu Hin Au\\
Department of Mathematics\\
Milwaukee School of Engineering \\
{ au@msoe.edu} \\
\\
}
\begin{document}

\maketitle

\begin{abstract}
We show that for every $n \geq 1$ and over any finite alphabet, there is a word whose circular factors of length $n$ have a one-to-one correspondence with the set of primitive words. In particular, we prove that such a word can be obtained by a greedy algorithm, or by concatenating all Lyndon words of length $n$ in increasing lexicographic order. We also look into connections between de Bruijn graphs of primitive words and Lyndon graphs.

Finally, we also show that the shortest word that contains every $p$-power of length $pn$ over a $k$-letter alphabet has length between $pk^n$ and roughly $(p+ \frac{1}{k}) k^n$, for all integers $p \geq 1$. An algorithm that generates a word which achieves the upper bound is provided.
\end{abstract}

\section{Introduction}

In this paper, we study generalizations of de Bruijn words, and provide a few results related to some well-studied collection of words. We first establish some notation. Given an integer $k \geq 2$, we define $\Sigma_k := \set{0, 1, \ldots, k-1}$, and let $|w|$ denote the length of any finite word $w \in \Sigma_k^{*}$. Also, we define $w[i]$ to be the $i^{\tn{\scriptsize{th}}}$ symbol in $w$, and $w[i\pp j]$ to be the word $w[i]w[i+1] \cdots w[j-1]w[j]$, for any indices $i,j$ such that $1 \leq i \leq j \leq |w|$. If $i > j$, then we define $w[i\pp j]$ to be the empty word. Also, given any word $x \in \Sigma_k^n$ and an integer $p \geq 1$, we define $x^p$ to be the word obtained from concatenating $p$ copies of $x$. For example, $(01)^3 = 010101$. A word $w$ is \emph{$p$-power} if $w = x^p$ for some word $x$ and some integer $p$. Conventionally, $2$-powers are usually called squares, and $3$-powers are called cubes. 

We say that a word $x$ is a \emph{factor} (also sometimes called a subword) of another word $w$ if $x= w[i\pp j]$ for some indices $i,j$, and we say that $x$ is a \emph{circular factor} of $w$ if $x$ is a factor of $w^p$ for some integer $p$. Given integers $n$ and $k$, a sequence in which every word in $\Sigma_k^n$ appears as a circular factor exactly once is called a \emph{de Bruijn word}, named after Nicolaas Govert de Bruijn for his work on these sequences in~\cite{deBruijn46a}. For example, $00011101$ is a de Bruijn word for $\set{0,1}^3$. It has long been known that such a sequence exists for $\Sigma_k^n$, for every $n,k\geq 1$. In fact, there are exponentially many such sequences~\cite{FlyeSainteMarie1894a}.

There are many ways to generate a de Bruijn word for $\Sigma_k^n$. First, one can be obtained by a greedy algorithm:

\begin{alg}\label{babygreedy}
Generating a de Bruijn word $w$ for $\Sigma_k^n$

\begin{algorithm}[H]
\DontPrintSemicolon 
\KwIn{Integers $n,k \geq 1$}
Set $w[1 \pp n] = 0^n$\;
Set $i = n +1$\;
\While{$\exists \a \in \Sigma_k$ such that $w[i-n+1 \pp i-1] \a$ is not a factor of $w[1 \pp i-1]$}
{
Set $w[i]$ to be the largest such symbol $\a$\;
Increment $i$\;
}
Discard last $n-1$ symbols in $w$\;
\Return{$w$}
\end{algorithm}
\end{alg}

In other words, we start with $0^n$, and then successively append the largest symbol in the alphabet that does not create a factor of length $n$ that had appeared earlier in our sequence, and stop if there is no such symbol. Then the resulting word, with the last $n-1$ symbols removed, is a de Bruijn word for $\Sigma_k^n$. This simple algorithm was discovered independently by several mathematicians~\cite{Fredricksen82a}, first by~\cite{Martin34a}. 

Alternatively, one can also construct a de Bruijn word for $\Sigma_k^n$ by doing the following. Given a word $w \in \Sigma_k^n$, define 
\[
w^{(i)} := w[i+1\pp n]x[1\pp i]
\]
for all $i = 1, \ldots, n$. We say that $w^{(1)}, \ldots, w^{(n)}$ are the \emph{conjugates} of $w$, and define a word $w \in \Sigma_k^n$ to be \emph{primitive} if $w \neq w^{(i)}$ for all $i \in \set{1,2,\ldots, n-1}$. Next, a word $w \in \Sigma_k^n$ is \emph{Lyndon} if $w$ is primitive, and is the lexicographically smallest among its conjugates. The following result, due to Fredricksen and Maiorana~\cite{FredricksenM78a}, establishes a remarkable connection between de Bruijn words and Lyndon words.

\begin{thm}\label{LyndonO}
Let $w$ be the concatenation of all Lyndon words in $\Sigma_k^*$ of length dividing $n$, in increasing lexicographic order. Then $w$ is a de Bruijn word for $\Sigma_k^n$.
\end{thm}

For instance, the six binary Lyndon words with length dividing four are, in increasing lexicographic order, $0, 0001, 0011, 01,0111$ and $1$. Thus, by Theorem~\ref{LyndonO},
\[
w := 0000100110101111
\]
is a de Bruijn word for $\set{0,1}^4$. An advantage of this approach is that, unlike the greedy algorithm that requires exponential storage space during its execution, generating a de Bruijn word by concatenating Lyndon words can be done in constant time and space per bit~\cite{RuskeySW92a}.

More recently, Moreno~\cite{Moreno05a} extended the notion of de Bruijn words to an arbitrary dictionary $\mathcal{D} \subseteq \Sigma_k^n$, and defined a de Bruijn word for $\mathcal{D}$ to be a sequence in which every word in $\mathcal{D}$ (and no other words in $\Sigma_k^n$) appears as a circular factor exactly once. For instance, if we let $\D$ be the set of words in $\set{0,1}^4$ with at least two 1s, then the word $11101011001$ is a de Bruijn word for $\D$. Yet further generalizations of de Bruijn words, such as~\emph{universal cycles}, have also been studied in the literature (see, for instance,~\cite{ChungDG92a} and~\cite{Johnson09a}).

This paper will be organized as follows: In the next section, we first work with Moreno's generalization, and show that de Bruijn words of the set of primitive words in $\Sigma_k^n$ exist, for all integers $n,k \geq 2$. Among other results, we prove that a de Bruijn word for the set of primitive words in $\Sigma_k^n$ can be generated by either of the following procedures:

\begin{itemize}
\item
Start with $w = 0^{n-1}$, and iteratively append the largest symbol in $\Sigma_k$ that does not create a factor of length $n$ that is not primitive or has already appeared in $w$. Stop when the word cannot be further extended, and discard the last $n-1$ symbols of $w$.
\item
Concatenate all Lyndon words of length $n$, in increasing lexicographic order.
\end{itemize}

Some of the tools we use, such as presenting greedy algorithms under the framework for preference functions and making connections between de Bruijn and Lyndon graphs of dictionaries, could help with the analysis and construction of de Bruijn words of other dictionaries. In Section 3, we look into a different generalization of de Bruijn words, and show that the shortest sequence that contains all $p$-powers of length $pn$ as factors has length between $pk^n$ and roughly $(p+ \frac{1}{k})k^n$, for all integers $p \geq 1$. We provide an algorithmic proof for the upper bound, and discuss some computational results.

\ignore{

Given any word $x \in \Sigma_k^n$ and a number $p \geq 1$ such that $pn$ is an integer, we define
\[
x^p := \underbrace{x \cdot x \cdots x}_\text{$\lfloor p \rfloor$ times} \cdot x[1\pp (p - \lfloor p \rfloor)n].
\]
A word $w \in \Sigma_k^n$ is \emph{primitive} if it cannot be expressed as $x^p$ for another word $x$ and integer $p \geq 2$. Equivalently, $w$ is primitive if there does not exist any $i \in \set{1,2,\ldots, n-1}$ such that $w = w[i+1\pp n]w[1\pp i]$. Also, given any number $p \geq 1$ such that $pn$ is an integer, we say that $w \in \Sigma_k^{pn}$ is a \emph{$p$-power}  if there exists a word $x \in \Sigma_k^n$ such that $w = x^p$. Primitive words and powers are among the most fundamental and well-studied families of words, due to their wealth of theoretical properties and omnipresence in real-world applications (see, for example,~\cite{Lothaire05a}).

Here, we look into the problem of finding the shortest sequence that contains all primitive words of a given length over a fixed alphabet. We will provide multiple ways to generate such a sequence, showing an interplay of many well-known concepts in combinatorics on words, such as de Bruijn words and Lyndon words. Finally, we tackle the same problem with primitive words replaced by $p$-powers, and provide some bounds on the length of such a sequence.
}

\section{de Bruijn Words for Primitive Words}

First of all, it is apparent de Bruijn words do not exist for some dictionaries $\D \subseteq \Sigma_k^n$.  For instance, consider the dictionary $\D := \set{0000, 0001,0011,0111}$. There is clearly no binary word of length $4$ that contains all four words in $\D$ as circular factors. Moreno~\cite{Moreno05a} observed that the dictionaries for which de Bruijn words exist can be characterized by looking at their corresponding de Bruijn graphs. Given $\D \subseteq \Sigma_k^n$, its \emph{de Bruijn graph} $G^{\D}$ is defined as follows:
\begin{itemize}
\item
Its vertices $V(G^{\D})$ is the set of words in $\Sigma_{k}^{n-1}$ that are factors of some word in $\D$;
\item
Its arcs $E(G^{\D})$ is the set of ordered pairs $\set{u,v}$ where $u,v \in \Sigma_k^{n-1}$ and there exists a word in $\D$ whose prefix is $u$ and suffix is $v$.
\end{itemize}
For example, Figure~\ref{fig1} illustrates $G^{\D}$ where $\D$ is the set of words in $\set{0,1}^4$ with at least two 1s. Each arc $\set{u,v}$ (which will sometimes be abbreviated as $uv$ from here on to reduce cluttering) is labelled by the unique word in $\D$ of which $u$ is a prefix and $v$ is a suffix. Alternatively, $G^{\D}$ can be defined as the de Bruijn graph of $\Sigma_k^n$, with arcs corresponding to words in $\Sigma_k^n \setminus \D$ removed, and then isolated vertices deleted.

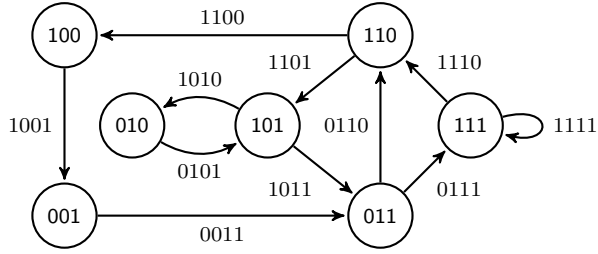
\begin{figure}
\begin{center}
\begin{tikzpicture}[scale =0.6 ,>=stealth',shorten >=1pt,auto,node distance=4cm,
  thick,main node/.style={circle, draw,font=\scriptsize\sffamily}]

  \node[main node] at (0,0) (001) {001};
  \node[main node] at (1.5,2) (010) {010};
  \node[main node] at (7,0) (011) {011};
  \node[main node] at (0,4) (100) {100};
  \node[main node] at (4.5,2) (101) {101};
  \node[main node] at (7,4) (110) {110};
  \node[main node] at (9,2) (111) {111};
;
 \path[->, font=\scriptsize\sffamily] 
(001) edge node [below] {$0011$} (011)
 (010) edge [bend right] node [below] {$0101$} (101)
(011) edge node {$0110$} (110)
 (011) edge node [below right] {$0111$} (111)
 (100) edge node [left] {$1001$} (001)
 (101) edge [bend right] node [above] {$1010$} (010)
 (101) edge node [below left]{$1011$} (011)
 (110) edge node [above]{$1100$} (100)
 (110) edge node [above left]{$1101$} (101)
(111) edge node [above right] {$1110$} (110)
  (111) edge [loop right] node {$1111$} (111);
\end{tikzpicture}
\caption{The de Bruijn graph for the set of words in $\set{0,1}^4$ with at least two 1s.}\label{fig1}
\end{center}
\end{figure}

Given a directed graph $G$, an \emph{Eulerian cycle} in $G$ is a closed walk that uses every arc in $G$ exactly once. An important property of de Bruijn graphs is that, for any dictionary $\D \subseteq \Sigma_k^n$, there is a one-to-one correspondence between de Bruijn words of $\D$ and the Eulerian cycles of $G^{\D}$~\cite{Moreno05a}. For instance, an Eulerian cycle in the graph in Figure~\ref{fig1} can be obtained from starting at the vertex $001$, and going through arcs  $0011$, $0111$, $1111$,$1110$, $1101$, $1010$, $0101$, $1011$, $0110$, $1100$, and $1001$ in that order. Then by concatenating the last symbol in each of these arcs, we obtain $11101011001$, the aforementioned de Bruijn word for this dictionary. Likewise, given any de Bruijn word, one can construct from its circular factors a corresponding Eulerian cycle in the de Bruijn graph.

Next, we show that there is a de Bruijn word for the set of primitive words in $\Sigma_k^n$, for every $n,k \geq 2$. In fact, we will provide three rather different proofs, as they each make use of different tools and connects with different existing results.

\subsection{Using Greedy Algorithms}

Before we focus on the set of primitive words, we look into a general framework that will allow us to analyze the viability of generating de Bruijn words using greedy algorithms for arbitrary dictionaries. Given a dictionary $\D \subseteq \Sigma_k^n$, Moreno~\cite{Moreno05a} showed that a necessary condition for $\D$ to have a de Bruijn word is the following:
\begin{equation}\label{C2}
|\set{ \alpha \in \Sigma_k : \alpha u \in \mathcal{D}}| = |\set{ \alpha 
\in \Sigma_k : u \alpha \in \mathcal{D}}|, \quad  \forall u \in \Sigma_k^{n-1}.
\end{equation}

That is, for any word $u$ of length $n-1$, the number of symbols that can left-extend $u$ to a word in $\D$ is equal to the number of symbols that can right-extend $u$ to a word in $\D$. This is equivalent to the condition that the in-degree is equal to the out-degree for every vertex in the graph $G^{\D}$.

\ignore{
Unless otherwise stated, we will let $\D$ denote the set of primitive words in $\Sigma_k^n$ in this section. Let's first formalize a few notions. }

Next, given a dictionary $\D$, we say that a word $u \in \Sigma_k^{*}$ is \emph{$\D$-nonrepeating} if it satisfies all of the following conditions:
\begin{enumerate}
\item
$|u| \geq n-1$, and $u[1 \pp n-1]$ is a factor of some word in $\D$;
\item
$u$ does not contain any word in $\Sigma_k^n \setminus \D$ as a factor;
\item
$u$ does not contain any word in $\D$ as a factor more than once.
\end{enumerate}

Note that if $x \in \D$, then $x$ and $x[1 \pp n-1]$ are both $\D$-nonrepeating. Also, using the same correspondence between de Bruijn words of $\D$ and Eulerian cycles in $G^{\D}$ described previously, a $\D$-nonrepeating word translates to a walk in $G^{\D}$ in which no arc is used more than once. As we will see subsequently, these $\D$-nonrepeating words will serve as eligible starting points of constructing de Bruijn words for $\D$.

Next, let $\P$ be a \emph{preference function} that maps each word in $\Sigma_k^{n-1}$ to an ordered set that contains each symbol in $\Sigma_k$ exactly once. We then define $f_{\max}(u)$ to be the word generated by the following algorithm

\begin{alg}
Generating $f_{\max}(u)$

\begin{algorithm}[H]
\DontPrintSemicolon 
\KwIn{Dictionary $\D \subseteq \Sigma_k^n$, preference function $\P$,  $\D$-nonrepeating word $u$}
Set $f_{\max}(u)[1 \pp |u|] = u$\;
Set $i = |u| +1$\;
\While{$\exists \a \in \Sigma_k$ such that $f_{\max}(u)[i-n+1 \pp i-1] \a \in \D$ and is not a factor of $f_{\max}(u)[1 \pp i-1]$}
{
Set $f_{\max}(u)[i]$ to be the first such symbol in the set $\P( f_{\max}(u)[i-n+1 \pp i-1])$\;
Increment $i$\;
}
\Return{$f_{\max}(u)$}
\end{algorithm}

\ignore{
\begin{enumerate}
\item
Set $f_{\max}(u)[i] = u[i],~\forall i = 1,\ldots, |u|$;
\item
$i = |u| + 1$;
\item
If there exists a symbol $\a \in \Sigma_k$ where $f_{\max}(u)[i-n+1] \a$ is primitive, set $f_{\max}(u)[i]$ to be the first such symbol in the set $\P( f_{\max}(u)[i-n+1 \pp i-1])$. Increment $i$ and repeat Step 3.
\item
Otherwise, terminate and return $f_{\max}(u)$.
\end{enumerate}}
\end{alg}

For example, let $\D = \set{0,1}^4$, $u = 0000$, and $\P$ be the preference function where
\[
\P(w) = \set{1,0}, \quad \forall w \in \set{0,1}^3.
\]
In other words, when choosing a symbol to append to $f_{\max}(u)$, we always try the symbol $1$ before $0$. In this case, $f_{\max}(u) = 0000111101100101000$, and removing the last $3$ symbols result in  a de Bruijn word for $\D$. More generally, when $\D = \Sigma_{k}^n$, $u = 0^n$ and
\[
\P(w) = \set{k-1, k-2, \ldots, 1, 0}, \quad \forall w \in \Sigma_k^{n-1},
\]
the construction of $f_{\max}(u)$ (with the last $n-1$ symbols removed) coincides with Algorithm~\ref{babygreedy}, the aforementioned greedy algorithm that generates a de Bruijn word for $\Sigma_k^n$. Here, the preference function $\P$ can be interpreted as always attempting to pick the largest eligible symbol to extend $f_{\max}(u)$. While the framework with preference functions may seem a little clumsy at this point, it allows the possibility of having the preference of symbols vary upon the current suffix of $f_{\max}(u)$, which we shall explore later in this section.

We now characterize situations where, given dictionary $\D$, $\D$-nonrepeating word $u$, and preference function $\P$, $f_{\max}(u)$ is in fact a de Bruijn word for $\D$ (after having its last $n-1$ symbols removed). Consider the following closely related sequence:

\begin{alg}
Generating $f_{\min}(u)$

\begin{algorithm}[H]
\DontPrintSemicolon 
\KwIn{Dictionary $\D \subseteq \Sigma_k^n$, preference function $\P$, $\D$-nonrepeating word $u$}
Set $f_{\min}(u)[1 \pp |u|] = u$ \;
Set $i = |u| +1$\;
\While{$\exists \a \in \Sigma_k$ such that $f_{\min}(u)[i-n+1 \pp i-1] \a \in \D$ and is not a factor of $f_{\min}(u)[1 \pp i-1]$}
{
Set $f_{\min}(u)[i]$ to be the last such symbol in the set $\P( f_{\min}(u)[i-n+1 \pp i-1])$\;
Increment $i$\;
}
\Return{$f_{\min}(u)$}
\end{algorithm}
\end{alg}

That is, $f_{\min}(u)$ is constructed in a similar fashion as $f_{\max}(u)$, except that we iteratively append the least preferred symbol among all eligible ones, instead of the most preferred. Somewhat surprisingly, the words obtained from being greedy and ``anti-greedy" can be related as follows.

\begin{thm}\label{greedyok}
Suppose we are given a dictionary $\D \subseteq \Sigma_k^{n}$ that satisfies~\eqref{C2}, $u \in \Sigma_{k}^{*}$ that is $\D$-nonrepeating, and preference function $\P$. If $u[1 \pp n-1]$ is a factor of $f_{\min}(w)$ for all $w \in \Sigma_{k}^{n-1}$ that is a factor of some word in $\D$, then $f_{\max}(u)$ contains every word in $\D$ as factor exactly once. Moreover, the word obtained from $f_{\max}(u)$ by discarding the last $n-1$ symbols is a de Bruijn word for $\D$.
\end{thm}

\begin{proof}
By construction (and the fact that $u$ is $\D$-nonrepeating), every factor of $f_{\max}(u)$ of length $n$ is in $\D$, and no such factors appear twice. Therefore, it suffices to show that every word in $\D$ does appear as a factor in $f_{\max}(u)$.

First, observe that $f_{\max}(u)$ must end with $u[1 \pp n-1]$. Otherwise, let $x$ be the suffix of $f_{\max}(u)$ of length $n-1$, and suppose $x$ appears $q$ times in $f_{\max}(u)$ as a factor. The construction of $f_{\max}(u)$ terminates at $x$ implies that $|\set{\b : x \b \in \mathcal{D}}| = q-1$. However, since $f_{\max}(u)$ starts with $u[1 \pp n-1]$ which by assumption is not equal to $x$, we have $|\set{\b : \b x \in \mathcal{D}}| \geq q$, contradicting the assumption that $\D$ satisfies~\eqref{C2}. 

Next, suppose for a contradiction that there exists $\a_1 \in \Sigma_k, y \in \Sigma_k^{n-1}$ such that $\a_1y \in \D$ but is not a factor of $f_{\max}(u)$. Since $| \set{\b : \b y \in \mathcal{D}}| = | \set{\b : y \b \in \mathcal{D}}|$ and
\[
| \set{\b : \b y~\textrm{is a factor of $f_{\max}(u)$}}| = |\set{\b : y \b~\textrm{is a factor of $f_{\max}(u)$}}|,
\]
there exists $\a_2 \in \Sigma_k$ such that $y\a_2 \in \D$ but is not a factor of $f_{\max}(u)$. In particular, since the algorithm always chooses the most preferred symbol to extend $f_{\max}(u)$, we may assume that $\a_2$ is the last symbol in the ordered set $\P(y)$ where $y\a_2$ is in $\D$.

Applying the same reasoning on $y[2\pp n-1]\a_2$, we conclude that if we let $\a_3$ be the least preferred symbol in $\P(y[2\pp n-1]\a_2)$ such that $y[2\pp n-1]\a_2\a_3$ is in $\D$, then $y[2\pp n-1]\a_2 \a_3$ does not appear in $f_{\max}(u)$. 

Keep proceeding in this manner, and we conclude that any factor of length $n$ in $f_{\min}(\a_1y)$ does not appear in $f_{\max}(u)$. By the same argument we used above to show that $f_{\max}(u)$ must have $u[1\pp n-1]$ as its prefix and suffix, we may conclude that $f_{\min}(\a_1y)$ has both $\a_1 y[1\pp n-1]$ as prefix and suffix. Since $f_{\min}(\a_1 y)$ contains $u[1 \pp n-1]$ as a factor by assumption, this implies that there exists symbol $\b$ where $u[1 \pp n-1] \b$ is both in $\D$ and a factor of $f_{\min}(\a_1y)$, and thus $u[1 \pp n-1]\b$ does not appear in $f_{\max}(u)$. However, since we have shown above that $f_{\max}(u)$ must end with $u[1 \pp n-1]$, it then must contain all words in $\D$ with prefix $u[1 \pp n-1]$, and thus we obtain a contradiction. Therefore, $f_{\max}(u)$ must contain every word in $\D$ as a factor exactly once. Finally, since $f_{\max}(u)$ both starts and ends with $u[1 \pp n-1]$, a de Bruijn word for $\D$ can be obtained by discarding the last $n-1$ symbols of $f_{\max}(u)$.
\end{proof}

We remark that the converse of Theorem~\ref{greedyok} is not true. For an example, let $\D = \set{0,1}^4$ and $\P(w) = \set{1,0}$ for all $w \in \set{0,1}^3$, then
\[
f_{\max}(0011) = 0011110110010100001,
\]
and removing the last $3$ symbols result in a de Bruijn word for $\set{0,1}^4$. However, we see that
\[
f_{\min}(000) = 00001000,
\]
which does not contain $0011$. Hence, while $f_{\min}(w)$ contains $u$ for every $w \in \Sigma_k^{n-1}$ is a sufficient condition for $f_{\max}(u)$ to contain a de Bruijn word for $\D$, it is not necessary. 

Next, we apply Theorem~\ref{greedyok} to show that the simple greedy algorithm that generates a de Bruijn word for $\Sigma_k^n$ can be adapted to generate a de Bruijn word for the set of primitive words. We first need the following.

\begin{lem}\label{C2prim}
Let $\D$ be the set of primitive words in $\Sigma_k^n$. Then $\D$ satisfies~\eqref{C2}.
\end{lem}

\begin{proof}
For any $u \in \Sigma_k^{n-1}, \alpha \in \Sigma_k$,  if $\alpha u$ is not primitive, then it can be written as $(\a x)^p$ for some word $x$ and integer $p \geq 2$. But then $u \a = (x \a)^p$ is not primitive either. Thus, we see that for every $u \in \Sigma_k^{n-1}$, $\a u $ is primitive if and only if $u \a$ is primitive. 

Therefore, the sets on either side of the equality in~\eqref{C2} are identical for every $u \in \Sigma_k^{n-1}$, so it is apparent that they have the same size.
\end{proof}

We will also need the following property of primitive words:

\begin{lem}\label{u0u1}
For every $u \in \Sigma_k^{n-1}$ and distinct symbols $\a,\b \in \Sigma_k$, if $u \a$ is not primitive, then  every factor of $u\b^{n-1}$ of length $n$ is primitive.
\end{lem}

\begin{proof}
To obtain a contradiction, suppose that $u\a$ is not primitive, and that there exists integer $\l \leq n-1$ such that $u[\l \pp n-1]\b^{\l}$ is also not primitive. Then there exist words $x,y$ and integers $p,q \geq 2$ such that $u \a = x^p$ and $\b^{\l-1} u[\l \pp n-1]\b = y^q$ (the latter is due to $\b^{\l-1}u[\l \pp n-1]\b$ being a conjugate of $u[\l \pp n-1]\b^{\l}$). Notice that $|y| > \l$, or otherwise $\b^{\l-1} u[\l \pp n-1]\b = y^q$ implies $y = \b^{|y|}$, and consequently $u = \b^{n-1}$, which would imply that $u\a$ is primitive. Thus, we obtain that 
\begin{eqnarray}
\label{aeq} u[s|x|] &=& \a, \quad \forall s \in \set{1, \ldots, p-1}, \\
\label{beq} u[t|y| +r] &=& \b, \quad \forall t \in \set{1,\ldots, q-1}, r \in \set{0, \ldots, \l-1}.
\end{eqnarray}

Define $m$ to be the least common multiple of $|x|$ and $|y|$. If $m < n$, then $u[m] = \a$ by~\eqref{aeq} and $u[m] = \b$ by~\eqref{beq}, a contradiction. Thus, $|x|$ and $|y|$ are coprime, and so for any fixed $r \in \set{1,\ldots, |y|-1}$, there exists $s \in \set{1, \ldots, |y|-1}$ such that $s|x| \equiv r$~(mod $|y|$). Since $u[s|x|] = \a$ for all $s \in \set{1,\ldots, |y|-1}$, this implies that $y = \a^{|y|-1} \b$. But then $u\a = \left(  \a^{|y|-1} \b \right)^{q-1} \a^{|y|}$ would be primitive, which is a contradiction.
\end{proof}

We are finally ready to prove the following:

\ignore{
Then we have the following:

\begin{lem}\label{fuend}
Let $\D$ be the set of primitive words in $\Sigma_k^n$. For every $w \in \Sigma_k^{n-1}$, $f_{\min}(w)$ contains $0^{n-1}$.
\end{lem}

\begin{proof}
Given $f_{\min}(w)$, define $s_i $ be the number of nonzero symbols in the factor $f_{\min}(w)[i \pp i+n-1]$. We define $j$ to be the smallest index such that $s_j \leq s_i ~\forall i \geq 1$. We show that $s_j = 1$. 

Suppose for a contradiction that $s_j \geq 2$. By the choice of $j$, we know that $s_i > s_j$ for all $i< j$, which implies that none of the conjugates of $f_{\min}(w)[j \pp j+n-1]$ are factors of $f_{\min}(w)[1 \pp j+n-2]$. 

Next, we show that if $v \in \Sigma_k^n$ is a primitive word with has at least two nonzero symbols, then we can always replace some nonzero symbol in $v$ by $0$ and obtain another primitive word. Let $\l$ denote the number of nonzero symbols in $v$. Now suppose for a contradiction that the words obtained from replacing any of those $\l$ symbols by $0$ are not primitive. Then there exist words $x_1^{p_1}, x_2^{p_2}, \ldots, x_{\l}^{p_{\l}}$ such that $x_i$ is primitive, $p_i \geq 2$, and $x_i^{p_{i}}$ differs from $v$ by one position, for all $i \in \set{1,\ldots, \l}$. 

Next, we show that $p_i \neq p_j$ whenever $i \neq j$. Since $x_i^{p_i}$ and $x_j^{p_j}$ differ by exactly two positions, $p_i = p_j \geq 3$ would imply that $x_i = x_j$, a contradiction. If $p_i=p_j=2$, then exactly one of $x_i, x_j$ is equal to $v[1\pp n/2]$ (both or neither would imply $x_i = x_j$). If it was $x_i$, that means $v[n/2+1\pp n]$ contains one more 1 than $v[1\pp n/2]$. However, $x_j \neq v[1\pp n/2]$ implies that there is one more 1 in $v[1\pp n/2]$ than in $v[n/2+1\pp n]$, a contradiction.

Thus, we know that $\set{p_1, \ldots, p_{\l}}$ is a set of distinct integers. Since they are all at least two, one of them has to be greater than $\l$, which cannot happen as $v$ only has $\l$ nonzeros. Hence, we conclude that there must exist $k, j < k < j+n$, where $s_k < s_j$, contradicting the choice of $j$. 

Finally, since $s_j=1$, this would mean that $f_{\min}(w)[j \pp j+n-1] = 0^{\l_1} \a 0^{\l_2}$ for some nonzero symbol $\a$ and integers $\l_1, \l_2$ that sum up to $n-1$. By the choice of $j$, no circular factors of $0^{n-1}\a$ appears in $f_{\min}(w)[1 \pp j+n-2]$ as a factor, and thus $f_{\min}(w)[j+n \pp j+n+p_1-1] = 0^{p_1}$. Hence, $0^{n-1}$ does appear in $f_{\min}(w)$.
\end{proof}
}

\begin{thm}\label{primgreedy}
Let $\D$ be the set of primitive words in $\Sigma_k^n$ where $n,k \geq 2$, and let $\P$ be the preference function where
\[
\P(w) = \set{k-1, k-2, \ldots, 1 ,0}, \quad \forall w \in \Sigma_{k}^n.
\]
Then $f_{\max}(0^{n-1})$ (minus the last $n-1$ symbols) is a de Bruijn word for $\D$.
\end{thm}

\begin{proof}
First, $0^{n-1}$ is obviously $\D$-nonrepeating. Also, we have shown that the set of primitive words satisfies~\eqref{C2}. Thus, by Theorem~\ref{greedyok}, it suffices to show that $f_{\min}(w)$ contains $0^{n-1}$ for all $w \in \Sigma_k^{n-1}$. By Lemma~\ref{u0u1}, we see that $f_{\min}(w)$ either has prefix $w 0^{\l}$ that contains a factor of $0^{n-1}$, or $w0^{\l}1^{n-1}0^{n-1}$ for some $\l \geq 0$. In either case, $f_{\min}(w)$ contains $0^{n-1}$, and our claim follows.
\end{proof}

Thus, we have shown that starting with $0^{n-1}$ and iteratively appending the largest possible symbol that does not create a factor of length $n$ that has already appeared or is not primitive will result in a de Bruijn word for the set of primitive words. it is not hard to see that the ingredients in the above arguments can be extended to show the following slightly stronger result:

\begin{thm}
let $\D$ be the set of primitive words in $\Sigma_k^n$, where $n,k \geq 2$. Let $\P$ be the preference function such that
\[
\P(w) = \set{\a_1, \a_2, \ldots, \a_{k-1}},~\quad \forall w \in \Sigma_k^n,
\]
where $\set{\a_1, \ldots, \a_{k-1}}$ is any fixed ordering of the alphabet $\Sigma_k$. Then $f_{\max}\left( (\a_{k-1})^{n-1} \right)$ (minus the last $n-1$ symbols) is a de Bruijn word for $\D$.
\end{thm}

In particular, this implies that the ``prefer minimum'' algorithm (start with $n-1$ copies of the largest symbol, iteratively extend sequence by writing down the smallest symbol that does not create a repeat or non-primitive factor of length $n$) also generates a de Bruijn word. 

We next look into a case where the preference function $\P$ varies upon $w \in \Sigma_k^{n-1}$. First, Alhakim~\cite{Alhakim10a} showed the following interesting result for binary sequences, which we paraphrase here using preference functions:

\begin{thm}
Let $\D = \set{0,1}^n$, and $\P$ be the preference function such that
\[
\P(w) = \left\{
\begin{array}{ll}
\set{1,0} & \tn{if $w \in \set{0,1}^{n -1}$ ends with a $0$;}\\
\set{0,1} & \tn{if $w \in \set{0,1}^{n -1}$ ends with a $1$.}\\
\end{array}
\right.
\]
Then $f_{\max}(0^{n})$, with the last $n-1$ symbols removed and then the symbol $1$ appended, is a de Bruijn word for $\set{0,1}^n$.
\end{thm}

Alhakim named the construction of this sequence the ``prefer opposite algorithm'' --- at each iteration, it prefers to extend the sequence by adding the symbol that is different from the current last symbol in the sequence. For an example, when $n =4$, we obtain
\[
f_{\max}(0000) = 000010100110111000.
\]
Then we remove the last three $0$'s and add a $1$, and obtain $0000101001101111$, which is a de Bruijn word for $\set{0,1}^4$.

We now apply Theorem~\ref{greedyok} again to show that a de Bruijn word for the set of primitive words can be obtained in this ``prefer opposite'' manner as well. 

\begin{thm}\label{preferopposite}
Let $\D$ be the set of primitive words in $\set{0,1}^n$, and define the preference function $\P$ such that
\[
\P(w) = \left\{
\begin{array}{ll}
\set{1,0} & \tn{if $w \in \set{0,1}^{n -1}$ ends with a $0$;}\\
\set{0,1} & \tn{if $w \in \set{0,1}^{n -1}$ ends with a $1$.}\\
\end{array}
\right.
\]
Then $f_{\max}(0^{n-1})$, with the last $n-1$ symbols removed, is a de Bruijn word for $\D$.
\end{thm}

\begin{proof}
Again, $0^{n-1}$ is $\D$-nonrepeating, and the set of primitive words satisfies~\eqref{C2}. Next, consider $f_{\min}(w)$, which intuitively is the word obtained from iteratively extending $w$ with primitive factors in a ``prefer same'' manner. It only remains to show that $f_{\min}(w)$ contains $0^{n-1}$ for all $w \in \set{0,1}^{n-1}$. Let $\l \geq n$ be the smallest integer such that $f_{\min}(w)[\l] \neq f_{\min}(w)[\l+1]$. Such an $\l$ must exist, as the algorithm would not produce a non-primitive factor of length $n$, and thus would not append the same symbol $n$ consecutive times.

Next, $f_{\min}(w)[\l] \neq f_{\min}(w)[\l+1]$ means that setting $f_{\min}(w)[\l+1] = f_{\min}(w)[\l] $ would have created a non-primitive factor (as the construction of $f_{\min}(w)$ ``prefers same''). Thus, by Lemma~\ref{u0u1}, $f_{\min}(w)[\l+1 \pp \l+n] = (f_{\min}(w)[\l+1])^{n-1}$. Now if $f_{\min}(w)[\l] =1$, then we have our factor of $0^{n-1}$ in $f_{\min}(w)$. Otherwise, if $0^{n-1}$ had not shown up earlier in $f_{\min}(w)$ already, $f_{\min}(w)[\l+n]$ would be followed by a string of $n-1$ $0$'s (by Lemma~\ref{u0u1} again). Thus, we see that $f_{\min}(w)$ contains $0^{n-1}$ in any case, and the result follows from Theorem~\ref{greedyok}.
\end{proof}

Thus, we obtain another way of generating a de Bruijn word for the set of primitive words in $\set{0,1}^n$ using a greedy algorithm. Furthermore, we see that the use of preference functions and Theorem~\ref{greedyok} give us a template to streamline the analysis of the feasibility of using greedy algorithms to generate de Bruijn words for arbitrary dictionaries. 

\subsection{Concatenation of Lyndon words}

Recall that a de Bruijn word for $\Sigma_k^n$ can also be obtained from concatenating all Lyndon words of length dividing $n$ in increasing lexicographic order. Next, we show that a de Bruijn word for the primitive words can be produced by a similar concatenation.

\begin{thm}\label{Lyndon}
Let $w$ be the concatenation of all Lyndon words in $\Sigma_{k}^n$ in increasing lexicographic order. Then $w$ is a de Bruijn word for the set of primitive words in $\Sigma_{k}^n$.
\end{thm}

Theorem~\ref{Lyndon} was first conjectured by Michael Domaratzki, who has a proof for the case $k=2$ (personal communication, July 2013). Also, throughout this section, we will let $k'$ denote the symbol $k-1$ to reduce cluttering.
 
Before we prove Theorem~\ref{Lyndon}, we need the following result due to Cummings, who previously published a proof for the case $k=2$ in~\cite{Cummings88a}. It is also implied by Duval's~\cite{Duval88a} algorithm of generating Lyndon words.

\begin{lem}\label{Lyndonlem}
Let $x \in \Sigma_{k}^n$ be a Lyndon word. Define $\l := \max \set{ i : x[i] \neq k'}$. If $\l \geq 2$, then $y := x[1\pp \l-1] (k')^{n-\l+1}$ is also a Lyndon word.
\end{lem}

\ignore{
\begin{proof}
First, we show that $y$ is primitive. Suppose it is not, and $y = z^p$ for some integer $p \geq 2$. Then $x = z^q z' z^{p-q-1}$ for some $q \in \set{0, \ldots, p-1}$, where $z'$ is $z$ with some instance of $k'$ being replaced by $x[\l]$. Since $z' < z$ and $x$ is Lyndon, $q$ has to be 0. However, this implies that $\l \leq |z|$, which means that $z= k^{n/p}$ (by equating the last $n/p$ symbols of $z^p$ and $y$), and thus $y = (k')^n$. Since $\l \geq 2$, this would imply that $x$ is not Lyndon. Thus, $y$ is primitive.

Now suppose, to obtain a contradiction, that $y$ is not Lyndon, and there exists $j \in \set{1,\ldots, n-1}$ such that $y[j+1\pp n]y[1\pp j] < y$. This would imply that
\[
x < x[j+1\pp n]x[1\pp j] < y[j+1\pp n]y[1\pp j] < y.
\]
Note that the first two inequalities are due to $x$ being Lyndon and $x < y$, respectively. Since the first $\l-1$ symbols of $x$ and $y$ coincide, it follows from the chain of inequalities that the same can be said of $x[j+1\pp n]x[1\pp j]$ and $y[j+1\pp n]y[1\pp j]$. If $2 \leq j < \l$, that would imply that $x[\l] = y[\l]$, a contradiction. However, $j \geq \l$ implies that $y[j+1\pp n]y[1\pp j]$ starts with $k'$, and $x$ being Lyndon and $\l \geq 2$ implies that $y[1] < k'$, contradicting $y[j+1\pp n]y[1\pp j] < y$.
\end{proof}
}
That is, if we replace the last non-$k'$ letter in a Lyndon word by $k'$, the resulting word is also Lyndon (unless it is $(k')^n$). We are now ready to prove Theorem~\ref{Lyndon}.

\begin{proof}[Proof of Theorem~\ref{Lyndon}]
If $w$ is the concatenation of all Lyndon words of length $n$, then $w$ has length $n$ times the number of Lyndon words in $\Sigma_k^n$. Thus, the number of circular factors of $w$ of length $n$ is equal to the number of primitive words in $\Sigma_{k}^n$, and it suffices to show that each primitive word appears at least once in $w$ (as that would imply that each primitive word appears exactly once). We do so by showing that given any Lyndon word $x$, its conjugate $x^{(i)} = x[i+1\pp n]x[1\pp i]$ appears in $w$ as a circular factor, for all $i \in \set{1,\ldots,n}$.

First, obviously $x^{(n)} = x$ appears in $w$. Next, we write $x$ as $x[1\pp \l] (k')^{n-\l}$ such that $x[\l] \neq k'$. If $\l \geq 2$, then  $y := x[1\pp\l-1] (k')^{n-\l+1}$ is also Lyndon by Lemma~\ref{Lyndonlem}. Thus, the Lyndon word that immediately follows $x$ in $w$ is sandwiched between $x$ and $y$, and has prefix $x[1\pp\l-1]$. Therefore, $w$ contains the factor $x \cdot x[1\pp\l-1]$, which contains the conjugates $x^{(1)}, x^{(2)}, \ldots, x^{(\l-1)}$.

Next, we locate the factor $x^{(i)}$ in $w$, for all $i \in \set{\l, \ldots, n-1}$. Note that $x^{(i)} = (k')^{i-\l+1} x[1\pp \l] (k')^{n-i-1}$. Let $y$ be the smallest Lyndon word that has prefix $x[1\pp\l] (k')^{n-i-1}$ (one must exist --- $x$ is one), and $z$ be the Lyndon word that immediately precedes $y$ in $w$. By the choice of $y$, $z[1\pp n+\l-i-1] < y[1\pp n+\l-i-1]$. Then by Lemma~\ref{Lyndonlem}, the last $i-\l+1$ symbols of $z$ must all be $k'$, and $zy$ contains the factor $(k')^{i-\l+1} x[1\pp \l](k')^{n-i-1} = x^{(i)}$.

The remaining case when there is no Lyndon word preceding $y$ in $w$ implies $x[1\pp \l] (k')^{n-i-1}$ is the word of all $0$s, and so $i = n-1$, and $x^{(i)} = (k')^{n-\l} 0^{\l}$. Since the first and last Lyndon words in $w$ are $0^{n-1} 1$ and $(k'-1) k'^{n-1}$ respectively, $w$ contains the circular factor $(k')^{n-1}0^{n-1}$, which must contain $x^{(i)}$. Hence, we are finished.
\end{proof}

As with the case of generating a de Bruijn word for $\Sigma_k^n$, concatenating Lyndon words is much more computationally efficient in generating a de Bruijn word for primitive words than using greedy algorithms, whose execution require exponential storage space.

\subsection{Relating de Bruijn Graphs and Lyndon Graphs}

Next, we detail yet another argument that shows the existence of de Bruijn words for primitive words. Unlike the two algorithmic proofs provided above, this argument is non-constructive, and makes use of connections between de Bruijn graphs and Lyndon graphs. 

Given integers $n,k \geq 2$, we let $\P_{n,k}$ denote the de Bruijn graph of the set of primitive words in $\Sigma_k^n$. Also, let $\L_{n,k}$ denote the \emph{Lyndon graph} of $\Sigma_k^n$, which has a vertex for each Lyndon word in $\Sigma_k^n$, and joins two Lyndon words by an edge if they differ in exactly one position. For example, Figure~\ref{fig2} illustrates the graph $\L_{6,2}$.

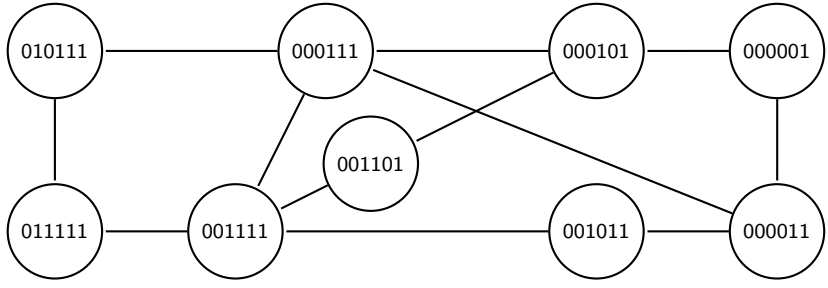
\begin{figure}
\begin{center}
\begin{tikzpicture}[scale =1.2 ,>=stealth',shorten >=1pt,auto,node distance=1cm,
  thick,main node/.style={circle, draw,font=\scriptsize\sffamily}]

  \node[main node] at (0,2) (010111) {010111};
  \node[main node] at (0,0) (011111) {011111};
  \node[main node] at (3,2) (000111) {000111};
  \node[main node] at (2,0) (001111) {001111};
  \node[main node] at (3.5,0.75) (001101) {001101};
  \node[main node] at (6,2) (000101) {000101};
  \node[main node] at (6,0) (001011) {001011};
  \node[main node] at (8,2) (000001) {000001};
  \node[main node] at (8,0) (000011) {000011};

 \path 
(000001) edge (000011)
(000001) edge (000101)
(000011) edge (000111)
(000011) edge (001011)
(000101) edge (000111)
(000101) edge (001101)
(000111) edge (001111)
(000111) edge (010111)
(001011) edge (001111)
(001101) edge (001111)
(001111) edge (011111)
(010111) edge (011111);
\end{tikzpicture}
\caption{The Lyndon graph $\L_{6,2}$}\label{fig2}
\end{center}
\end{figure}
Notice that $\L_{6,2}$ only has one component. In fact, this is shown by Cummings to be true in general~\cite{Cummings88a}.

\begin{lem}\label{Lyndoncon}
$\L_{n,k}$ is connected for all $n,k \geq 2$.
\end{lem}

\begin{proof}
Given any pair of Lyndon words $x,y \in \Sigma_{k}^n$, Lemma~\ref{Lyndonlem} shows that there is a path from $x$ to $x[1] (k')^{n-1}$ in $\L_{n,k}$. Similarly, there is also a path between $y$ and $y[1](k')^{n-1}$. Since $x[1](k')^{n-1}$ is adjacent to $y[1](k')^{n-1}$, we see that there is a path between $x$ and $y$ in $\L_{n,k}$. Thus, $\L_{n,k}$ is connected.
\end{proof}

On the surface, $\P_{n,k}$ and $\L_{n,k}$ appear to have very little in common. First of all, the former is directed and the latter is not. Also, their vertices are represented by words of different lengths, with adjacency rules that are quite different. However, it turns out that they can be related through a series of basic graph operations.

Given a directed graph $G$, its \emph{line graph} $L(G)$ is obtained by defining a vertex for each arc in $G$, and joining $u$ and $v$ in $L(G)$ if there is a vertex in $G$ that is incident with their corresponding arcs. Note that while $G$ is directed, $L(G)$ is undirected. Next, let $G$ be an undirected graph and $S \subseteq V(G)$. Then \emph{contracting} $S$ in $G$ yields the graph obtained from replacing the vertices in $S$ by a single vertex $v_S$, and joining it to vertices in $V(G) \setminus S$ that was adjacent to some vertex in $S$.

Then we have the following:

\begin{prop}
Let $\H_{n,k}$ be the graph obtained from starting with $L(\P_{n,k})$, and successively contracting $\set{x^{(i)} : i \in \set{1,\ldots,n}}$ for all Lyndon words $x \in \Sigma_k^n$. Then $\L_{n,k}$ is a subgraph of $\H_{n,k}$.
\end{prop}

\begin{proof}
First, if during the contraction process, we label the vertex obtained from contracting $\set{x^{(i)} : i \in \set{1,\ldots,n}}$ by $x$ for all Lyndon word $x \in \Sigma_k^n$, then it is easy to see that $\H_{n,k}$ and $\L_{n,k}$ have the same vertex set. Thus, it suffices to show that two Lyndon words are joined by an edge in $\H_{n,k}$ if they differ by exactly one position.

Let $u \a v$ and $u \b v$ be two Lyndon words in $\Sigma_k^n$, where $u,v \in \Sigma_k^*$ and $\a, \b \in \Sigma_k$. Observe that $\a vu$ and $vu \b$ are both arcs in $\P_{n,k}$ (since they are both primitive), and share the vertex $vu$. Hence, $\a vu$ and $vu \b$ are joined by an edge in $L(\P_{n,k})$. Since $\a vu$ and $vu \b$ are conjugates of $u \a v$ and $u \b v$ respectively, we see that $u\a v$ and $u\b v$ are joined by an edge in $\H_{n,k}$.
\end{proof}

Figure~\ref{fig3} illustrates the transformation from $\P_{4,2}$ to $\H_{4,2}$, which turns out to be exactly the graph $\L_{4,2}$. In general, while $\H_{n,k}$ and $\L_{n,k}$ have the same vertex set, the former can have more edges. For instance, while $000011$ and $001101$ differ by three positions, they are adjacent in $\H_{6,2}$, since the arcs $000110$ and $001101$ share the vertex $00110$ in $\P_{6,2}$. 

\begin{figure}[htb]
\begin{center}
\begin{tikzpicture}[scale =0.8 ,>=stealth',shorten >=1pt,auto,node distance=4cm,
  thick,main node/.style={circle, draw,font=\scriptsize\sffamily}, word node/.style={font=\footnotesize\sffamily}]

  \node[main node] at (2,0.5) (000) {000};
  \node[main node] at (0,4) (001) {001};
  \node[main node] at (2,2) (010) {010};
  \node[main node] at (0,6) (011) {011};
  \node[main node] at (4,4) (100) {100};
  \node[main node] at (2,8) (101) {101};
  \node[main node] at (4,6) (110) {110};
  \node[main node] at (2,9.5) (111) {111};
;
 \path[->, font=\scriptsize\sffamily] 
(000) edge node [below left] {$0001$} (001)
(001) edge node [right] {$0011$} (011)
(001) edge node [above right=-0.1cm] {$0010$} (010)
(010) edge node [above left=-0.1cm] {$0100$} (100)
(100) edge node [below right] {$1000$} (000)
(011) edge node [below] {$0110$} (110)
 (011) edge node [above left] {$0111$} (111)
 (100) edge node [above	] {$1001$} (001)
 (101) edge node [below right=-0.1cm]{$1011$} (011)
 (110) edge node [left]{$1100$} (100)
 (110) edge node [below left=-0.1cm]{$1101$} (101)
(111) edge node [above right] {$1110$} (110);

\def\x{8.8}
\def\y{-1}
\def\z{15.5}

  \node[main node] at (\x + 0,1) (0001) {0001};
  \node[main node] at (\x + 4,1) (1000) {1000};
  \node[main node] at (\x + 0.7,2.5) (0010) {0010};
  \node[main node] at (\x + 3.3,2.5) (0100) {0100};
  \node[main node] at (\x + 2,4) (1001) {1001};
  \node[main node] at (\x + 0,5) (0011) {0011};
  \node[main node] at (\x + 4,5) (1100) {1100};
  \node[main node] at (\x + 2,6) (0110) {0110};
  \node[main node] at (\x + 0.7,7.5) (1011) {1011};
  \node[main node] at (\x + 3.3,7.5) (1101) {1101};
  \node[main node] at (\x + 0,9) (0111) {0111};
  \node[main node] at (\x + 4,9) (1110) {1110};

\draw[dotted] (\x - 3.2,3.1) -- (\x + 4.8,3.1) -- (\x + 4.8, 0.4) -- (\x -3.2, 0.4) -- (\x -3.2,3.1) ;
\node[word node] at (\x -1.9,2.2) (1) {\tn{Conjugates}};
\node[word node] at (\x -1.9,1.8) (1) {\tn{of 0001}};
\draw[dotted] (\x - 3.2,6.6) -- (\x + 4.8,6.6) -- (\x + 4.8, 3.4) -- (\x -3.2, 3.4) -- (\x -3.2,6.6) ;
\node[word node] at (\x -1.9,5.2) (1) {\tn{Conjugates}};
\node[word node] at (\x -1.9,4.8) (1) {\tn{of 0011}};
\draw[dotted] (\x - 3.2,9.6) -- (\x + 4.8,9.6) -- (\x + 4.8, 6.9) -- (\x -3.2, 6.9) -- (\x -3.2,9.6) ;
\node[word node] at (\x -1.9,8.2) (1) {\tn{Conjugates}};
\node[word node] at (\x -1.9,7.8) (1) {\tn{of 0111}};

\node at (2,\y) (P) {$\P_{4,2}$};
\node at (\x+2,\y) (LP) {$L(\P_{4,2})$};
\node at (\z,\y) (H) {$\H_{4,2}$};

 \path[->, dashed] 
(P) edge node [below] {$L(\cdot)$} (LP)
(LP) edge node [below] {\tn{Contraction}} (H);


  \node[main node] at (\z,1.5) (0001b) {0001};
  \node[main node] at (\z,5) (0011b) {0011};
  \node[main node] at (\z,8.5) (0111b) {0111};

 \path[font=\scriptsize\sffamily] 
(0001b) edge (0011b)
(0011b) edge (0111b);

 \path[font=\scriptsize\sffamily] 
(0001) edge (0010)
(0010) edge (0100)
(0100) edge (1000)
(1000) edge (0001)
(1001) edge (0100)
(1001) edge (0010)
(1001) edge (0011)
(1100) edge (1000)
(0011) edge (0001)
(0011) edge (0110)
(0110) edge (1100)
(1100) edge (1001)
(0110) edge (1101)
(0110) edge (1011)
(1100) edge (1110)
(0011) edge (0111)
(1011) edge (0111)
(0111) edge (1110)
(1110) edge (1101)
(1101) edge (1011)
(0110) edge[bend right] (0111)
(0110) edge[bend left] (1110)
(1100) edge (1101)
(0011) edge (1011)
(1100) edge (0100)
(0011) edge (0010)
(1001) edge[bend right] (1000)
(1001) edge[bend left] (0001)

;

\end{tikzpicture}
\caption{Transforming $\P_{4,2}$ to $\H_{4,2}$}\label{fig3}
\end{center}
\end{figure}
Now we assemble the results in this section to provide yet another proof that a de Bruijn word for the primitive words exists, and we do that by showing that $\P_{n,k}$ has an Eulerian cycle.

First, Lemma~\ref{C2prim} implies that every vertex in $\P_{n,k}$ has the same in-degree and out-degree. Thus, it suffices to show that the underlying undirected graph of $\P_{n,k}$ is connected. 

To obtain a contradiction, suppose there are vertices $u,v$ that belong to different components in $\P_{n,k}$. If we let $x$ and $y$ be arcs that are incident with $u,v$ respectively, then $x$ and $y$ are in different components in $L(\P_{n,k})$. Next, observe that the $n$ conjugates of any primitive word form a directed cycle of length $n$ in $\P_{n,k}$. Thus, the $n$ corresponding vertices cannot be spread across multiple components in $L(\P_{n,k})$, and hence $\H_{n,k}$ cannot have fewer components than $L(\P_{n,k})$. 

However, $\L_{n,k}$ is shown to be connected, is contained in $\H_{n,k}$, and they have the same vertex set. Therefore, $\H_{n,k}$ only has one component, which implies that $L(\P_{n,k})$ is connected, a contradiction. Hence, we conclude that $\P_{n,k}$ has an Eulerian cycle, and there is a de Bruijn word for the set of primitive words in $\Sigma_k^n$.

In fact, if we extract the minimal ingredients we used the above argument, we obtain the following slightly stronger statement:

\begin{cor}
Let $\D \subseteq \Sigma_k^n$ be a dictionary that satisfies~\eqref{C2}, and has the property that for every pair of Lyndon words $u \a v, u \b v \in \Sigma_k^n$ where $u,v \in \Sigma_k^*$ and $\a,\b \in \Sigma_k$, 
\ignore{
\[
\D \cap \set{ \left(u\a v\right)^{(i)} : i \in \set{1,\ldots,n}} \neq \emptyset \quad \tn{and}  \quad \D \cap \set{ \left(u\b v\right)^{(i)} : i \in \set{1,\ldots,n}} \neq \emptyset,
\]
then}
\[
\D \cap \set{\a vu, vu\a} \neq \emptyset \quad \tn{and} \quad \D \cap \set{\b vu, vu\b} \neq \emptyset.
\]
Then there is a de Bruijn word for $\D$.
\end{cor}

\begin{proof}
Consider the de Bruijn graph $G^{\D}$, and let $H$ be the graph obtained from contracting all the conjugate classes of the line graph of $G^{\D}$. Notice that the Lyndon words $u \a v, u\b v$ differ by exactly one bit, and thus are adjacent in $\L_{n,k}$. Now if $\D \cap \set{\a vu, vu\a} \neq \emptyset$ and $\D \cap \set{\b vu, vu\b} \neq \emptyset$, that means $\D$ contains a conjugate of $u\a v$ and a conjugate of $u\b v$ such that those two edges are both incident with the vertex $vu$ in $G^{\D}$. As a result, $u\a v$ and $u\b v$ are joined by an edge in $H$, and thus $H$ contains $\L_{n,k}$ as a subgraph. This implies that $H$ is connected, and consequently the underlying undirected graph of $G^{\D}$ is connected. Together with the fact that $\D$ satisfies~\eqref{C2}, we conclude that $\D$ has a de Bruijn word.
\end{proof}

It would be interesting to know if any other properties of primitive words (or other families of words) and Lyndon words can be uncovered by this relation between their corresponding graphs. Establishing a tighter connection between these families of graphs (e.g. finding a transformation on $\P_{n,k}$ that yields exactly $\L_{n,k}$) could also lead to new and interesting findings.

\section{Short sequences containing powers}

While an arbitrary dictionary $\D$ may not have a de Bruijn word, there might be words of length not much larger than $|\D|$ that contains all words in $\D$ as circular factors. For instance, while we mentioned in the previous section that $\D := \set{0000,0001,0011,0111}$ does not have a de Bruijn word, there are many sequences that contain all fours words in $\D$ as factors, with $0000111$ being the shortest such sequence. Thus, in this regard, we can consider the word $0000111$ as the closest thing to a de Bruijn word for $\D$, as there are no shorter sequences that contain all words in $\D$.

This motivates the following question: Given an arbitrary dictionary $\D \subseteq \Sigma_k^n$, what is the shortest word that contains all words in $\D$ as circular factors? Such a sequence can be seen as a generalization of de Bruijn words, since if a dictionary $\D$ has a de Bruijn word, that word must also be the shortest possible sequence that contains all words in $\D$ as circular factors.

In this section, we tackle the above question for a particular family of dictionaries, and try to find the shortest sequence that contains all $p$-powers in $\Sigma_k^{pn}$ as circular factors. For $p=1$, it is obvious that there is a de Bruijn word for all $p$-powers (it would just be a de Bruijn word for $\Sigma_k^n$). However, this does not apply for any $p>1$. For instance, $\D = \set{0000,0101, 1010, 1111}$ are the set of all squares in $\set{0,1}^4$, and the shortest sequence that contains all four words as circular factors is $w=000010101111$, which has length $12$. More generally, if we let $\mathcal{D}$ to be the set of $p$-powers in $\Sigma_k^{pn}$, then $G^{\D}$ has as many components as the number of conjugacy classes in $\Sigma_k^n$. In fact, we shall soon see that any sequence that contains all $k^n$ $p$-powers in $\Sigma_k^{pn}$ must contain at least $(p-1)k^n$ factors of length $pn$ that are not $p$-powers.

Define an equivalence relation on $\Sigma_k^n$, where $u \sim v$ if and only if they are conjugates of each other, and let $C(n,k)$ denote the number of conjugacy classes in $\Sigma_k^n$. It is well known that $C(n,k) =  \sum_{ d \geq 1: d | n} \frac{\phi(d)}{n} k^{\frac{n}{d}}$, where $\phi(d)$ is Euler's totient function --- the number of integers between $1$ and $d$ that are coprime with $d$. Note that $C(n,k) \geq \frac{k^n}{n}$ for all $n,k$.

Then we have the following:

\begin{prop}\label{lb}
Suppose $w \in \Sigma_k^*$ contains every $p$-power in $\Sigma_k^{pn}$ as factors. Then $|w| \geq k^n + (p-1)nC(n,k) \geq pk^n$.
\end{prop}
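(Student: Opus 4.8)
The plan is to read off from $w$ which of its length-$2n$ factors are squares, and to show that these occurrences cannot be packed densely. Let $\sigma$ denote the left cyclic shift, $\sigma(x) = x[2..n]x[1]$. The starting observation is that if $w[i..i+2n-1] = uu$ and $w[i+1..i+2n] = vv$ are both squares, then the shared length-$(2n-1)$ block $w[i+1..i+2n-1]$ is simultaneously the suffix of $uu$ and the prefix of $vv$, which (since the length-$n$ prefix of a square of length $2n$ is its square root) forces $v = \sigma(u)$. Hence along any maximal run of consecutive positions $a, a+1, \ldots, b$ all starting squares, the squares are exactly $(\sigma^i x)(\sigma^i x)$ for $i = 0, 1, \ldots, b-a$ --- all conjugates of a common $x$ --- and $w[a..b+2n-1]$ is a prefix of the periodic word $x^{\infty}$. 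Let $S \subseteq \{1, \ldots, |w|-2n+1\}$ be the set of positions that start a square, and write $S$ as the disjoint union of its maximal runs $\rho_1 < \cdots < \rho_R$, with $\rho_j = \{a_j, \ldots, b_j\}$ and $t_j = |\rho_j|$.

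Two counting bounds follow quickly. A run of length $t$ exhibits at most $t$ distinct squares, all lying in one conjugate class; since each of the $k^n$ squares occurs somewhere in $w$, summing over runs gives $\sum_j t_j \ge k^n$. Since the $C(n,k)$ conjugate classes of $\Sigma_k^n$ must each contain the squares of at least one run, we also get $R \ge C(n,k)$.

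The crux is a gap lemma: if $\rho_j$ is not the last run, then none of the $n$ positions immediately following $b_j$ lies in $S$. Let $w[q..q+2n-1] = x'x'$ be the last square of $\rho_j$, so $q = b_j$. Maximality of $\rho_j$ means $w[q+1..q+2n]$ exists and is not a square; since $w[q+1..q+2n-1]$ is the length-$(2n-1)$ suffix of $x'x'$, and appending $x'[1]$ to that suffix yields $(\sigma x')(\sigma x')$, we must have $w[q+2n] \ne x'[1]$. Now suppose, for some $r$ with $1 \le r \le n$, that $w[q+r..q+r+2n-1] = zz$ is a square. Its prefix of length $2n-r \ge n$ coincides with the substring $w[q+r..q+2n-1]$ of $x'x'$ and therefore determines $z = \sigma^r(x')$; reading its symbol in position $2n-r+1$ --- which is exactly the letter $w[q+2n]$ --- gives $z[n-r+1] = x'[1]$, contradicting $w[q+2n] \ne x'[1]$. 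So no such $r$ exists, and consequently $a_{j+1} \ge b_j + n + 1$ for every $j < R$.

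Assembling: $|w| \ge b_R + 2n - 1$, and telescoping together with $b_j - b_{j-1} = (t_j - 1) + (a_j - b_{j-1}) \ge t_j + n$ and $b_1 \ge t_1$ gives $b_R \ge \sum_j t_j + (R-1)n \ge k^n + (C(n,k)-1)n$. Hence $|w| \ge k^n + nC(n,k) + (n-1) \ge k^n + nC(n,k)$, and the remaining inequality $k^n + nC(n,k) \ge 2k^n$ follows from $C(n,k) \ge k^n/n$. I expect the gap lemma --- specifically the index bookkeeping that makes every shift $r \le n$ collide with the single forbidden value $w[q+2n] = x'[1]$ --- to be the only subtle point; everything else is bookkeeping and the two counting inequalities.
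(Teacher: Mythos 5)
Your proof is correct and follows essentially the same strategy as the paper: both arguments rest on the facts that overlapping squares must have conjugate roots, that there are $C(n,k)$ conjugate classes each forcing a gap of $n$ wasted positions, and that the total is therefore at least $k^n + n(C(n,k)-1)$ plus a boundary term. The main difference is one of rigor rather than substance: your gap lemma (with the index computation showing $z=\sigma^r(x')$ and $w[q+2n]=x'[1]$) supplies a detailed proof of the overlap bound that the paper simply asserts, so your write-up is, if anything, more complete.
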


\begin{proof}
Given $x, y \in \Sigma_k^n$, observe that if $x \not\sim y$, then any word that contains both $x^p$ and $y^p$ as factors has length at least $2pn-n+1$. Therefore, every time two consecutive $p$-powers in $w$ belong to different conjugacy classes, there are at least $(p-1)n$ factors of length $pn$ in $w$ in between that are not $p$-powers. Since there are $C(n,k)$ conjugacy classes in $\Sigma_k^n$, we see that $w$ contains at least $(p-1)n (C(n,k) -1)$ factors of length $pn$ that are not $p$-powers.

Since $w$ must also contain at least $k^n$ factors that are $p$-powers, there are a total of at least $k^n + (p-1)n(C(n,k)-1))$ factors of length $pn$ in $w$. Hence
\[
|w| \geq k^n + (p-1)n(C(n,k) -1) + pn-1 \geq k^n + (p-1)nC(n,k) \geq p k^n,
\]
and our claim follows.
\end{proof}

Next, we show that there is a word $w$ of length $\approx (p + \frac{1}{k})k^n$ over $\Sigma_k$ that contains all $p$-powers of length $pn$. Given $u \in \Sigma_k^n$, define 
\[
\delta(u) := \frac{\min \set{ i \geq 1 : u^{(i)} = u} }{n}.
\]
Equivalently, $\delta(u)$ is the reciprocal of $\max \set{p \geq 1: \tn{$u$ is a $p$-power}}$. Note that $\delta(u) = 1$ if and only if $u$ is primitive, and that $u^{p+\delta(u) - (1/n)}$ contains all $p$-powers of all conjugates of $u$ as factors exactly once.

Next, we say that a word $s \in \Sigma_k^{*}$ is a \emph{conjugate cover} of $\Sigma_k^n$ if for every $u \in \Sigma_k^n$, $s$ contains some circular factor of length $n-1$ in $u$. Conjugate covers exist for all $n,k$. For instance, if we take $t$ to be a de Bruijn word for $\Sigma_k^{n-1}$, then $s := t \cdot t[1\pp n-2]$ is a conjugate cover, since it contains all words in $\Sigma_k^{n-1}$ as factors. We then construct a word $w$ that contains all $p$-powers in $\Sigma_k^{pn}$ by the following algorithm:

\begin{alg}\label{algb}
Generating a sequence $w$ that contains all $p$-powers in $\Sigma_k^{pn}$

\begin{algorithm}[H]
\DontPrintSemicolon 
\KwIn{Integers $n,k,p$ where $n,k \geq 2, p \geq 1$, and $s$ a conjugate cover of $\Sigma_k^{n}$}
Set $w = \epsilon$ (the empty string)\;
Set $L = \Sigma_k^n$\;
\For{$j = 1, \ldots, |s|-n+2$}{
\For{$\a = 0,1,\ldots,k-1$}{
Set $u = s[j\pp j+n-2]\a$\;
\eIf{$u \in L$}{\emph{Accept} $\a$ and append $u^{p + \delta(u) -1}$ to the end of $w$ \;
Remove all conjugates of $u$ from $L$\;
}{\emph{Reject} $\a$ and do not append anything}
}
Append $s[j]$ to $w$\;
}
Append $s[|s| -n+3\pp |s|]$ to $w$\;
\Return{$w$}

\end{algorithm}
\end{alg}

For example, consider the case $n=k=3$ and $p=2$. The word $s := 0221201100$ is a conjugate cover of $\set{0,1,2}^3$. In this case, Algorithm~\ref{algb} would execute as follows:

\begin{center}
\begin{tabular}{|c|c|l|l|l|}
\hline
$j$ & $s[j \pp j+1]$ & Accepted $\a$'s & Append to $w$ & Removed from $L$  \\
\hline
$1$ & $02$ & $0,1,2$ & $0200200210210220220$ & Conjugates of $020,021,022$ \\
$2$ & $22$ & $1,2$ & $22122122222$ & Conjugates of $221,222$ \\
$3$ & $21$ & $1$ & $2112112$ & Conjugates of $211$\\
$4$ & $12$ & $0$ & $1201201$ & Conjugates of $120$\\
$5$ & $20$ & None & $2$ & None\\
$6$ & $01$ & $0,1$& $0100100110110$ & Conjugates of $010,011$\\
$7$ & $11$ & $1$ & $11111$ & $111$ \\
$8$ & $10$ & None & $1$ & None\\
$9$ & $00$ & $0$ & $00000$ & $000$\\
\hline
\end{tabular}
\end{center}

The algorithm finally appends $0$ (the last symbol of $s$) to $w$, and outputs the word
\begin{eqnarray*}
w &=& 0200200210210220220~ 22122122222~2112112~1201201~2~\\
&& 0100100110110~11111~1~00000~0,
\end{eqnarray*}
which contains all squares of length $6$ over $\set{0,1,2}$. Next, we show that the word generated by Algorithm~\ref{algb} is not ``too much'' longer than the lower bound shown in Proposition~\ref{lb}.

\begin{thm}\label{alg2}
Let $w$ be the word constructed by Algorithm~\ref{algb}. Then $w$ contains $x^p$ as a factor for all $x \in \Sigma_k^n$. Moreover, $|w| = k^n + (p-1)nC(n,k) + |s|$.
\end{thm}

\begin{proof}
Recall that, given $x \in \Sigma_k^n$, $x^{(i)} = x[i+1\pp n]x[1\pp i]$. We first prove that each $p$-power appears in $w$ at least once by showing that for every $x \in \Sigma_k^n$, there exists $i \in \set{1,\ldots,n}$ such that $w$ contains $(x^{(i)})^{p+ \delta(x) - (1/n)}$ as a factor.

Let $j$ be the smallest index such that $s[j\pp j+n-2]$ is a prefix of some conjugate of $x$, say $x^{(i)}$. Since $s$ is a conjugate cover, such an index $j$ must exist. Then we know that the algorithm would accept $\a = x^{(i)}[n]$ at step $j$, and $(x^{(i)})^{p-1 + \delta(x)}$ is appended to $w$.

If at step $j$, some symbol larger than $\a$ is accepted, then we know the block $s[j\pp j+n-2] = x^{(i)}[1\pp n-1]$ immediately follows, giving us the desired power of $x^{(i)}$. Otherwise, we know that $s[j]$ gets added to $w$ at the end of step $j$.

Then, if any symbol is accepted in step $j+1$, then $s[j+1\pp j+n-1]$ is added to $w$, and we get our desired power of $x^{(i)}$. Otherwise, we just add $s[j+1]$ at the end of step $j+1$. Proceeding in this manner, we see that the algorithm always adds $s[j\pp j+n-2]$ immediately after adding $(x^{(i)})^{p-1 + \delta(x)}$ at step $j$. Since this holds for all $x \in \Sigma_k^n$, we see that $w$ contains all $p$-powers in $\Sigma_k^{pn}$. 

Next, we compute $|w|$. We have already found $k^n$ factors of length $pn$ that are $p$-powers. To count the other factors in $w$, we need to observe that, after accepting $\a_1$ at step $j$, if the next symbol accepted by the algorithm is $\a_2$ during step $j+\l$, then there are exactly $(p-1)n+\l$ factors of length $pn$ in $w$ between the last $p$-power in $(s[j\pp j+n-2]\a_1)^{p + \delta}$ and the first $p$-power in $(s[j+\l\pp j+n+\l-2]\a_2)^{p+\delta}$. Note that there could be $p$-powers among these blocks (e.g. when $\l = 1$ and $\a_2 = s[j+n-1]$), but we nonetheless count them under the ``other factors'' category. Also, if the last symbol accepted by Algorithm~\ref{algb} is $\a$ at step $|s| - n+2 - \l$, then there are $\l$ factors of length $pn$ in $w$ after the last $p$-power in $u^{p+\delta(u)}$, where $u = s[|s|-n+2-\l\pp|s|-\l]\a$.

Since each symbol accepted by Algorithm~\ref{algb} corresponds to a unique conjugacy class in $\Sigma_k^n$, we see that a total of $C(n,k)$ symbols are accepted throughout the algorithm. Therefore, $w$ contains exactly  $(p-1)n (C(n,k)-1) + |s|-n+1$ of these ``other factors'' of length $pn$. Thus,
\begin{eqnarray*}
|w| &=& k^n + (p-1)n(C(n,k)-1) + (|s|-n+1) + pn-1\\
& =& k^n + (p-1)nC(n,k)  + |s|,
\end{eqnarray*}
and we are finished.
\end{proof}

As mentioned before, we can always construct a conjugate cover out of a de Bruijn word for $\Sigma_{k}^{n-1}$. In fact, we could do slightly better than that when $n-1$ is not prime:

\begin{cor}\label{squarecor}
Suppose $n,k \geq 2$, and let $\D$ be the set of primitive words in $\Sigma_k^{n-1}$. Then there exists a word $w$ of length $k^n + (p-1)nC(n,k) + |\D|  +n+k-2$ that contains all $p$-powers in $\Sigma_k^{pn}$ as factors.
\end{cor}

\begin{proof}
By Theorem~\ref{alg2}, it suffices to show that there is a conjugate cover of $\Sigma_{k}^{n}$ of length $|\D| + n+k-2$. Let $t$ be the de Bruijn word for $\D$ constructed by concatenating Lyndon words as described in Theorem~\ref{Lyndon}. Then $|t| = |\D|$, and $t$ contains $\a^{n-2}$ as a factor at least once for all $\a \in \Sigma_k$. We obtain $s$ by replacing an instance of $\a^{n-2}$ in $t$ by $\a^{n-1}$ for each $\a \in \Sigma_k$, and then appending $0^{n-2}$ at the end. It is easy to see that $|s| = |\D| + n+k-2$, and $s$ contains all words in $\D$, as well as $\a^{n-1}$ for all $\a \in \Sigma_k$, as factors.

To show that $s$ is a conjugate cover, it suffices to show that for all $u \in \Sigma_k^n$, either it has a circular factor of length $n-1$ that is primitive, or $u = \a^n$ for some symbol $\a$. Observe that, for any $i \in \set{1,\ldots,n}$, if neither $u[i+1\pp n]u[1\pp i-1]$ nor $u[i+2\pp n]u[1\pp i]$ is primitive, then $u[i+1] = u[i]$ by Lemma~\ref{C2prim} and~\ref{u0u1}. Applying this argument on all $i$ yields that $u = \a^n$ for some $\a \in \Sigma_k$, and it follows that $s$ is a conjugate cover.
\end{proof}

Since the number of primitive words in $\Sigma_k^{n-1}$ is less than $k^{n-1}$, we have now shown that the shortest sequence that contains all $p$-powers in $\Sigma_k^{pn}$ has length roughly between $pk^n$ and $(p+ \frac{1}{k}) k^n$. For $p=1$, we know the truth is much closer to the lower bound, as there is a word of length $k^n +n-1$ that contains all words in $\Sigma_k^n$ as factors --- any de Bruijn word of $\Sigma_{k}^n$ with the first $n-1$ symbols repeated at the end would do.

Computational evidence suggests that this seems to be the case for $p=2$ as well.  Suppose we consider the special case of $k=p=2$, and build a sequence that contains all squares in $\set{0,1}^{2n}$ by the following procedure: 

\begin{alg}\label{algc}
Constructing a word $w$ that contains all squares of length $2n$ over $\set{0,1}$
\begin{algorithm}[H]
\DontPrintSemicolon 
\KwIn{Integer $n \geq 2$}
Set $w = 0^{2n}$\;
Set $L = \set{0,1}^n$\;
\While{$L \neq \emptyset$}
{
Pick $u \in L$ such that the prefix of $u$ overlaps the most with the current suffix of $w$. If there is a tie, pick the lexicographically smallest $u$. Append to $w$ such that $w$ now has suffix $u^{2 + \delta(u) - (1/n)}$. \;
Remove all conjugates of $u$ from $L$\;
}
\Return{$w$}
\end{algorithm}
\end{alg}

For any integer $n$, let $g(n)$ be the length of the sequence obtained by Algorithm~\ref{algc}, and let $f(n) := \frac{g(n)}{2^n + n C(n,2)}$. Figure~\ref{fig4} illustrates the behaviour of $f(n)$ for $n \in \set{4,\ldots,25}$.

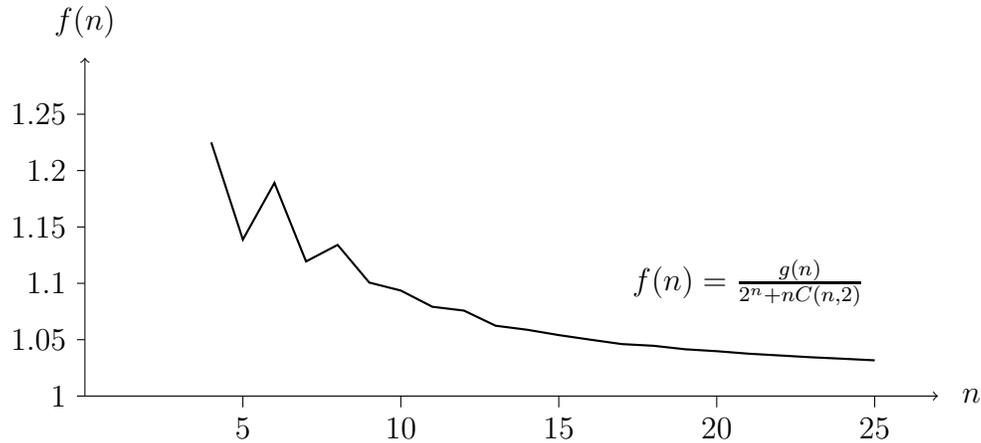
\begin{figure}
\begin{center}

\begin{tikzpicture}[y=15cm, x=.42cm]
	\draw[->] (0,1) -- coordinate (x axis mid) (27,1);
    	\draw[->] (0,1) -- coordinate (y axis mid) (0,1.3);
    	\foreach \x in {5,10,...,25}
     		\draw (\x,1.00) -- (\x,0.99)
			node[anchor=north] {\x};
    	\foreach \y in {1,1.05,1.1,1.15,1.2,1.25}
     		\draw (0,\y) -- (-3pt,\y) 
     			node[anchor=east] {\y}; 
	\node[right = 0.18cm] at (27,1) {$n$};
	\node[above=0.12cm] at (0,1.3) {$f(n)$};


	\draw[thick] (4,1.225) -- (5,1.1388) -- (6,1.1891) -- (7,1.1194) -- (8,1.1341) -- (9,1.1007) -- (10,1.0936) -- (11,1.0792) -- (12,1.0758) -- (13,1.0624) -- (14,1.0588) -- (15,1.0541) -- (16,1.0500) -- (17,1.0461) -- (18,1.0446) -- (19,1.0415) -- (20,1.0398) -- (21,1.0376) -- (22,1.0360) -- (23,1.0344) -- (24,1.0331) -- (25,1.0317);    \node at (21,1.1) {$f(n) = \frac{g(n)}{ 2^n + nC(n,2)}$};
\end{tikzpicture}
\caption{Computational results for $f(n)$}\label{fig4}
\end{center}
\end{figure}

By Corollary~\ref{squarecor}, the length of shortest word that contains all squares in $\set{0,1}^{2n}$ is bounded above by roughly $\frac{5}{4}\left(2^n + nC(n,2)\right)$. However, we see that $f(n)$ appears to approach 1 as $n$ increases, and there seems to be room for improvement for the upper bound. Perhaps constructing the shortest possible conjugate covers can improve the upper bound to, say,  $k^n + (p-1)nC(n,k) + O(k^{n-1} /n)$. Also, we remark that the lower bound in Proposition~\ref{lb} also holds for fractional powers $p$ (given a positive real number $p$ where $pn$ is an integer, we can define $x^p := x^{\lfloor p \rfloor} x[1 \pp (p - \lfloor p \rfloor)n]$). It would be interesting to know if ``short'' sequences that contains all $p$-powers for a fractional $p$ exist, and whether there are efficient algorithms that generate short sequences that contains all $p$-powers in general.

\section{Acknowledgements}

We would like to deeply thank Jeffrey Shallit, who brought to our attention the problems tackled in this manuscript. In particular, it was his suggestion that a greedy algorithm could be applied to generate a de Bruijn word for primitive words. He also provided many helpful comments on the earlier drafts of this manuscript.

Furthermore, we would like to express our gratitude towards the anonymous referees who reviewed this manuscript, and gave extremely detailed and helpful suggestions that have improved both the content and the presentation of this paper.

Finally, some of the findings in this manuscript were obtained while the author was at the University of Waterloo, supported in part by an NSERC Scholarship, a Tutte Scholarship and a Sinclair Scholarship.

\bibliographystyle{alpha}
\bibliography{ref}

\end{document}